    \newcommand{\href}[2]{#2}
\newtheorem{theorem}{Theorem}[section]
\newtheorem{corollary}[theorem]{Corollary}
\newtheorem{lemma}[theorem]{Lemma}
\newtheorem{assumption}[theorem]{Assumption}
\numberwithin{equation}{section}  
  \newcommand{\mnoter}[1]{}
  \newcommand{\mnoteb}[1]{}
  \newcommand{\mnoteg}[1]{}
  \newcommand{\mnotem}[1]{}
\definecolor{myblue}{rgb}{0.2,0.2,0.7}
\definecolor{mygreen}{rgb}{0,0.6,0}
\definecolor{mycyan}{rgb}{0,0.6,0.6}
\definecolor{myred}{rgb}{0.9,0.2,0.2}
\definecolor{mymagenta}{rgb}{0.9,0.2,0.9}
\definecolor{mywhite}{rgb}{1.0,1.0,1.0}
\definecolor{myblack}{rgb}{0.0,0.0,0.0}
\DeclareMathOperator{\osc}{osc}
\DeclareMathOperator{\tr}{tr}
\DeclareMathOperator{\supp}{supp}
\DeclareMathOperator{\tol}{tol}
\newcommand{\ab}[2]{\langle#1,#2\rangle}
\newcommand{\Th}{\mathcal{T}_h}
\newcommand{\Eh}{\mathcal{E}_h}
\newcommand{\lr}[1]{\llbracket#1\rrbracket}
\begin{document}

\title[Convergence and optimality of AMFEM in the FEEC Framework]
      {Convergence and Optimality of Adaptive Mixed Methods for Poisson's equation 
       in the FEEC Framework}     
  
\author[M. Holst, Y. Li, A. Mihalik, R. Szypowski ]{Michael Holst, Yuwen Li, Adam Mihalik, and Ryan Szypowski}  

\address{Department of Mathematics\\
         University of California San Diego\\ 
         La Jolla CA 92093}
\email{mholst@math.ucsd.edu, yul739@ucsd.edu, amihalik@math.ucsd.edu}

\address{Department of Mathematics and Statistics\\
         Cal Poly Pomona\\ 
         Pomona CA 91768}
\email{rsszypowski@csupomona.edu}

\thanks{MH was supported in part by NSF Awards~1620366, 1262982, and 1217175.}
\thanks{YL was supported in part by NSF Award~1620366.}
\thanks{AM and RS were supported in part by NSF Award~1217175.}

\date{\today}   
\keywords{Finite element exterior calculus, 
adaptive finite element methods,
a posteriori error estimates,
convergence,
quasi-optimality
}

\begin{abstract}
Finite Element Exterior Calculus (FEEC) was developed by
Arnold, Falk, Winther and others over the last decade to exploit
the observation that mixed variational problems can be posed on a Hilbert complex, and Galerkin-type mixed methods can then be obtained 
by solving finite-dimensional subcomplex problems.
Chen, Holst, and Xu (Math.~Comp.~78 (2009) 35--53) established convergence and optimality of an 
adaptive mixed finite element method using Raviart--Thomas or Brezzi--Douglas--Marini elements for Poisson's equation 
on contractible domains in $\mathbb{R}^2$, which can be viewed as a boundary problem on the de Rham complex. Recently Demlow and Hirani (Found. Math. Comput.~14 (2014) 1337-1371) developed fundamental tools for a posteriori analysis on the de Rham complex.
In this paper, we use tools in FEEC to construct convergence and complexity results on domains with general topology and spatial dimension. In particular, we construct a reliable and efficient error estimator and a sharper quasi-orthogonality result using a novel technique. Without marking for data oscillation, our adaptive method is a contraction with respect to a total error incorporating the error estimator and data oscillation.

\end{abstract}

\maketitle


\tableofcontents

\clearpage


\section{Introduction}

An idea that has had a major influence on the
development of numerical methods for PDE applications
is that of \emph{mixed finite elements}, 
whose early success in areas such as computational
electromagnetics was later found to have surprising connections with
the calculus of exterior differential forms, including de~Rham
cohomology and Hodge
theory~\cite{Bossavit1988,Nedelec1980,Nedelec1986,GrKo2004}.
A core idea underlying these developments is
the {\em Helmholtz-Hodge} orthogonal decomposition 
of an arbitrary vector field $f \in(L^2(\Omega))^3$
into curl-free, divergence-free, and harmonic functions:
$$
f = \nabla p + \nabla \times q + h,
$$
where $p\in H_0^1(\Omega), q\in H(\text{curl},\Omega)$, and $h$ is harmonic (divergence- and curl-free).
The mixed formulation is explicitly computing the decomposition for $h=0$,
and finite element methods based on mixed formulations exploit this.
There is a connection between this decomposition and
{\em de Rham cohomology}; the space of harmonic forms is isomorphic 
to the first {\em de Rham cohomology} of the domain $\Omega$,
with the number of holes in $\Omega$ giving the first Betti number,
and creating obstacles to well-posed formulations
of elliptic problems.
A natural question is then: What is an appropriate mathematical framework 
for understanding this abstractly, that will allow for
a methodical construction of ``good'' finite element methods
for these types of problems?
The answer turns out to be the theory of {\em Hilbert Complexes}.
Hilbert complexes were originally studied in \cite{BrLe1992} 
as a way to
generalize certain properties of elliptic complexes, particularly the
Hodge decomposition and other aspects of Hodge theory.  
The \emph{Finite Element Exterior Calculus} (or {\em FEEC}) 
\cite{AFW06,AFW10} was developed to exploit this abstraction.
A key insight was that
from a functional-analytic point of view, a mixed variational
problem can be posed on a Hilbert complex: a differential
complex of Hilbert spaces, in the sense of \cite{BrLe1992}.
Galerkin-type mixed methods are then obtained by solving the
variational problem on a finite-dimensional subcomplex.
Stability and consistency of the resulting method, often shown using 
complex and case-specific arguments, are reduced by the framework to 
simply establishing existence of operators with certain properties that 
connect the Hilbert complex with its subcomplex, essentially giving a 
``recipe'' for the development of provably well-behaved methods.
\mnoter{
Red margin notes are urgent things.
}
\mnoteb{
Blue margin notes are just comments.
}

Due to the pioneering work of 
Babu\v{s}ka and Rheinboldt~\cite{Babuska.I;Rheinboldt.W1978}, 
adaptive finite element methods (AFEM) based on {\em a posteriori} error
estimators have
become standard tools in solving PDE problems arising in science 
and engineering
(cf.~\cite{Ainsworth.M;Oden.J2000,Verfurth.R1996,Repin.S2008}).
A standard adaptive algorithm has the general iterative structure:
\begin{equation}{\label{eq:adaptive}}
 \textsf{Solve} \longrightarrow \textsf{Estimate} \longrightarrow \textsf{Mark} \longrightarrow \textsf{Refine},
\end{equation}
where 
\textsf{Solve} computes the discrete solution 
       $u_\ell$ in a subspace $X_\ell\subset X$;
\textsf{Estimate} computes certain error estimators based on $u_\ell$, which
        are reliable and efficient in the sense that they are good
        approximation of the true error $u-u_\ell$ in the energy norm; 
\textsf{Mark} applies certain marking strategies based on the
        estimators; and finally, 
\textsf{Refine} divides each marked element and completes the mesh to
        to obtain a new partition, and subsequently
        an enriched subspace $X_{\ell+1}$.
The fundamental problem with the adaptive procedure~\eqref{eq:adaptive} 
is guaranteeing convergence of the solution sequence.
The first convergence result for~\eqref{eq:adaptive} was obtained by 
Babu\v{s}ka and Vogelius~\cite{Babuska.I;Vogelius.M1984} for 
linear elliptic problems in one space dimension.
The multi-dimensional case was open
until D\"orfler~\cite{Dorfler.W1996} proved convergence of~\eqref{eq:adaptive}
for Poisson's equation by using the so called D\"orfler marking, under the assumption that the initial mesh was fine 
enough to resolve the influence of data oscillation.
This result was improved by Morin, Nochetto, and 
Siebert~\cite{MS1}, in which the convergence was proved without conditions 
on the initial mesh, but requiring the so-called 
\emph{interior node property}, together with an additional marking step 
driven by data oscillation.
It was shown by Binev, Dahmen and 
DeVore~\cite{BBD04} for the first time that
AFEM for Poisson's equation in the plane has optimal computational complexity 
by using a special coarsening step.
This result was improved by Stevenson~\cite{RS07} by showing the optimal 
complexity in general spatial dimension without a coarsening step.
These error reduction and optimal complexity results were improved
in several aspects in~\cite{CKNS}.
In their analysis, the artificial assumptions of interior node and extra 
marking due to data oscillation were removed, and the convergence result 
is applicable to general linear elliptic equations.
The main ingredients of this new convergence analysis are the global upper
bound on the error given by the {\em a posteriori} estimator, 
orthogonality (or possibly only quasi-orthogonality) of the underlying
bilinear form arising from the linear problem, and a type of error indicator 
reduction produced by each step of AFEM. In another direction, Morin, Siebert, and Veeser~\cite{MSV} gave a plain convergence result of conforming AFEMs for a widge range of linear problems without using D\"orfler makring.
We refer to~\cite{NV} for a recent survey of 
convergence analysis of AFEM for linear elliptic PDE problems
which gives an overview of all of these results through 2012.
See also~\cite{HTZ08a} or an overview of various extensions
to nonlinear problems.

Of particular relevance here is the 2009 article
of Chen, Holst, and Xu~\cite{CHX}, where
convergence and optimality of an adaptive mixed finite element method (AMFEM) using Raviart--Thomas (RT) \cite{RT} or Brezzi--Douglas--Marini (BDM) \cite{BDM} elements
for Poisson's equation on simply connected polygons in $\mathbb{R}^2$ 
was established.
The main difficulty for convergence analysis of AMFEM is the lack of
minimization principle, and thus the failure of orthogonality. A main contribution of \cite{CHX} is
a quasi-orthogonality result on the error $\| \sigma - \sigma_h \|$. The proof is based on
the fact that the error is orthogonal to the divergence free 
subspace, while the part of the error that is not divergence free 
was bounded by the data oscillation using a discrete stability result.
We also mention that Becker and Mao \cite{BM} developed a convergent AMFEM with optimal comlexity using the lowest-order RT finite element in $\mathbb{R}^2$. They used a multigrid inexact solver in the \textsf{SOLVE} module, which is another direction of interest. 

In this paper, we generalize the results in~\cite{AA,CHX}
by analyzing the error $\| \sigma - \sigma_h \|$ in the FEEC framework, which allows us extend the convergence and complexity results for contractible
domains in two dimensions in~\cite{CHX} to domains of arbitrary 
topology and spatial dimension. In FEEC terminology, the methods considered in~\cite{CHX} are equivalent 
to those for solving the Hodge Laplacian problem when $k=n=2$. All of our results apply to the case $k=n$ for arbitrary $n\geq2$ and domains which are not necessarily contractible. Even in the case $k=n=2$, our quasi-orthogonality result is sharper than \cite{CHX} in the sense that it involves a local data oscillation. The quasi-orthogonality Theorem \ref{quasiortho} is motivated by Becker and Mao's result \cite{BM} in $\mathbb{R}^2$. With the sharper quasi-orthogonality, we are able to prove contraction of Algorithm \textsf{AMFEM} by defining the total error  $\|\sigma-\sigma_h\|^2+\rho\eta^2_{\Th}(\sigma_h,\Th)+\zeta\osc^2_{\Th}(f,\Th)$, see Theorem \ref{termination}. Comparing to \cite{CHX} using a separate marking driven by data oscillation, \textsf{AMFEM} uses a single marking step based on the estimator $\eta_{\Th}(\sigma_h,T)$. 

This paper is a revised version of the unpublished preprint \cite{HMS} in 2013. The reliability proof of $\eta_{\Th}$ remains basically unchanged but is stated in an algebraic way. The main improvement are two-fold. First we give a completely new proof of Theorem \ref{quasiortho}, a refined quasi-orthogonality result,   while \cite{HMS} follows the quasi-orthogonality proof in \cite{CHX}. Second, the contraction analysis of \textsf{AMFEM} is novel by using the aforementioned improved quasi-orthogonality and total error. In addition, several inaccuracies in \cite{HMS} such as proofs of Corollary \ref{dhdc} and quasi-optimality are fixed or removed.

Recently, there are several results on convergence and optimality of AMFEM in FEEC. Demlow \cite{Demlow} developed a convergent AFEM with optimal complexity for computing the space of harmonic forms. In \cite{CW1}, Chen and Wu developed a convergent AMFEM for solving the Hodge Laplacian with index $1\leq k\leq n-1$ with respect to the error $\|d(\sigma-\sigma_h)\|^2+\|d(u-u_h)\|^2$ on contractible domains. The second author \cite{YL} developed two AMFEMs for the Hodge Laplacian with index $1\leq k\leq n$ on Lipschitz domains with general topology. When $k=n$, his results can control and reduce the energy error $\|\sigma-\sigma_h\|_{H\Lambda^{n-1}}$ while \textsf{AMFEM} is dealing with the $L^2$ error $\|\sigma-\sigma_h\|$. Assuming sufficient regularity,  $\|\sigma-\sigma_h\|=O(h^{r+2})$ is of higher order than $\|\sigma-\sigma_h\|_{H\Lambda^{n-1}}=O(h^{r+1})$ when using the generalized BDM pair \eqref{BDM}. In addition, the quasi-orthogonality result Theorem \ref{quasiortho} is sharper than \cite{YL} and the proof is quite different.

The remainder of the paper is organized as follows.
In Section~\ref{sec:prelim} we introduce the notational and technical tools in FEEC 
needed for the paper.
In Section~\ref{sec:estimator} we present an error indicator with global reliability and local efficiency.
In Section~\ref{sec:quasiortho}, we construct the quasi-orthogonality result. The adaptive algorithm \textsf{AMFEM} is then presented in Section~\ref{sec:conv}, 
and we prove both convergence and optimality.

\section{Preliminaries}
\label{sec:prelim}

In this section we first review abstract Hilbert complexes.
We then examine the particular case of the de Rham complex.
We follow closely the notation and the general development 
of Arnold, Falk, and Winther in~\cite{AFW06,AFW10}.
We also discuss results from Demlow and Hirani in~\cite{DH}.
(See also~\cite{HoSt10a,HoSt10b} for a concise summary of Hilbert Complexes
in a yet more general setting.)
We then give an overview of the basics of Adaptive Finite Element Methods 
(AFEM), and the ingredients we will need to prove convergence and optimality
within the FEEC framework.
 
\subsection{Hilbert complexes }

We begin with a quick summary of some basic concepts and definitions.
A \emph{Hilbert complex} $(W, d )$ is a sequence of Hilbert spaces $W^k$ equipped with the inner product $\langle\cdot,\cdot\rangle$, closed and densely defined linear operators, $d^k$, which map their domain, $V^k \subset W^k$ to the kernel of $d^{k+1}$ in $W^{k+1}$.  A Hilbert complex is \emph{bounded} if each $d^k$ is a bounded linear map from $W^k$ to $W^{k+1}$  A Hilbert complex is \emph{closed} if the range of each $d^k$ is closed in $W^{k+1}$.
Given a Hilbert complex $(W, d)$, the subspaces  $V^k \subset W^k$ endowed with the graph inner product
\begin{equation*}
\langle u, v \rangle_{V} = \langle u, v \rangle + \langle d^k u, d^k v \rangle,
\end{equation*}
form a Hilbert complex $(V, d)$ known as the \emph{domain complex}.
By definition $d^{k+1}\circ$  $d^k = 0$, thus  $(V, d)$ is a bounded 
Hilbert complex.
Additionally, $(V, d)$ is closed if $(W, d)$ is closed.

The range of $d^{k-1}$ in $V^k$ will be represented by $\mathfrak{B}^k$, and the null space of $d^k$ will be represented by $\mathfrak{Z}^k.$  Clearly, $\mathfrak{B}^k \subset \mathfrak{Z}^k$.  The elements of $\mathfrak{Z}^k$ orthogonal to $\mathfrak{B}^k$ are the space of harmonic forms, represented by $\mathfrak{H}^k$.  For a closed Hilbert complex we can write the \emph{Hodge decomposition} of $W^k$ and $V^k$,
\begin{align}
W^k &= \mathfrak{B}^k \oplus \mathfrak{H}^k \oplus \mathfrak{Z}^{k \perp}, \\
V^k &= \mathfrak{B}^k \oplus \mathfrak{H}^k \oplus \mathfrak{Z}^{k \perp_V },
\end{align}
where $\perp$ denotes the orthogonal complement w.r.t.~$\langle\cdot,\cdot\rangle$ and $\mathfrak{Z}^{k\perp V}:=\mathfrak{Z}^{k\perp}\cap V^k$. We use $P_{\mathfrak{B}}, P_{\mathfrak{H}}, P_{\mathfrak{Z}^\perp}$ to denote the $L^2$ projections onto $\mathfrak{B}^{k}, \mathfrak{H}^{k}, \mathfrak{Z}^{k\perp}$ ,respectively. Another important Hilbert complex will be the \emph{dual complex} $(W, d^*)$, where $d^*_k$, which is an operator from $W^k$ to $W^{k-1}$, is the adjoint of $d^{k-1}$.  The domain of $d^*_k$ will be denoted by $V^*_k$. Let $\mathfrak{Z}_k^*$ denote the null space of $d_k^*$ and $\mathfrak{B}_k^*$ the range of $d_{k+1}^*$.
For closed Hilbert complexes, an important result will be the \emph{Poincar\'e inequality},
\begin{equation}\label{poincare}
\|v\|_V \le c_P\|d^kv\|_W, \hspace{.2cm} v\in \mathfrak{Z}^{k\perp}.
\end{equation}
In addition, we have the important relation $\mathfrak{Z}_k^\perp=\mathfrak{B}_k^*$. The de Rham complex is the practical complex where general results we show on an abstract Hilbert complex will be applied.  

\subsubsection*{The abstract Hodge Laplacian}

Given a Hilbert complex $(W,d)$, the operator $L = dd^* + d^*d$,  $W^k \rightarrow W^{k}$ will be referred to as the \emph{abstract Hodge Laplacian}.  For $f\in W^k$, the Hodge Laplacian problem can be formulated as the problem of finding $u \in W^k$ such that
\begin{equation*}
\langle du, dv \rangle +\langle d^* u, d^* v \rangle = \langle f, v \rangle,\quad v \in V^k \cap V^*_k.
\end{equation*}

A necessary condition for the solution to exsit is $f\perp\mathfrak{H}^{k}$. The above formulation has undesirable properties from a computational perspective. The finite element spaces $V^k \cap V^{*}_k$ is difficult to construct, and the problem will not be well-posed in the presence of a non-trivial harmonic space $\mathfrak{H}^k$. In order to circumvent these issues, a well-posed (cf.~\cite{AFW06, AFW10}) \emph{mixed formulation of the abstract Hodge Laplacian} is introduced as the problem of finding $( \sigma, u, p ) \in V^{k-1} \times V^k \times \mathfrak{H}^k$, such that:
\begin{equation}\label{HL}
\begin{array}{rll} 
\langle \sigma, \tau \rangle - \langle d\tau, u \rangle& =  0, & \forall \tau \in V^{k-1},  \\ 
\langle d \sigma, v \rangle + \langle du,  dv \rangle+\langle p, v  \rangle& =  \langle f, v\rangle, & \forall v \in V^k ,\\ 
\langle u, q \rangle  &= 0, & \forall q \in \mathfrak{H}^k.
\end{array}
\end{equation}

\subsubsection*{Sub-complexes and approximate solutions to the Hodge Laplacian }

In ~\cite{AFW06,AFW10} a theory of approximate solutions to the Hodge Laplacian problem is developed by using finite dimensional approximation of Hilbert complexes. Let $(W,d)$ be a Hilbert complex with domain complex $(V,d)$. An approximating subcomplex is a set of finite dimensional Hilbert spaces, $V_h^k \subset V^k$ with the property that $dV^{k}_h \subset V_h^{k+1}$. We identify $W_h^k$ with $V_h^k$ but endowed with the norm $\ab{\cdot}{\cdot}$. Following \cite{AFW10}, we use $\mathfrak{Z}_h, \mathfrak{B}_h, \mathfrak{H}_h, \mathfrak{B}_h^*$ with obvious meaning. Since $V_h$ is a Hilbert complex, $V_h$ has a corresponding Hodge decomposition,
\begin{equation*}
V_h^k = \mathfrak{B}_h^k \oplus \mathfrak{H}_h^k \oplus \mathfrak{Z}_h^{k \perp}.
\end{equation*}
By fundamental theorem of linear algebra, we have $\mathfrak{Z}_h^\perp=\mathfrak{B}_h^*$. By this construction, $(V_h,d)$ is an abstract Hilbert complex with a well posed Hodge Laplacian problem. Find $( \sigma_h, u_h, p_h ) \in V^{k-1}_h \times V^k_h \times \mathfrak{H}^k_h$, such that
\begin{equation}\label{DHL}
\begin{array}{rll} 
\langle \sigma_h, \tau \rangle - \langle d\tau, u_h \rangle& =  0, & \forall \tau \in V^{k-1}_h,  \\ 
\langle d \sigma_h, v \rangle + \langle du_h,  dv \rangle+\langle p_h, v  \rangle& =  \langle f, v\rangle, & \forall v \in V^k _h,\\ 
\langle u_h, q \rangle  &= 0, & \forall q \in \mathfrak{H}^k_h .
\end{array}
\end{equation} 
An assumption made in~\cite{AFW10} in developing this theory is the existence of a bounded cochain projection $\pi_h:V \rightarrow V_h$, which commutes with the differential operator.

In~\cite{AFW10}, an a priori convergence result is developed for the solutions on the approximating complexes.  The result relies on the approximating complex getting sufficiently close to the original complex in the sense that inf$_{v \in V_h^k} \|u-v\|_V$ can be assumed sufficiently small for relevant $u \in V^k$.  Adaptive methods, on the other hand, gain computational efficiency by limiting the degrees of freedom used in areas of the domain where it does not significantly impact the quality of the numerical solution.

\subsection{The de Rham complex and approximation properties }
The de Rham complex is a cochain complex where the abstract results from the previous section can be applied in developing practical computational methods.  This section reviews concepts and definitions related to the de Rham complex that will be needed in our development of an adaptive finite element method.  This introduction will be brief and and mostly follows the notation from the more in-depth discussion in \cite{AFW10}.  

For the remainder of the paper we assume a 
bounded Lipschitz polyhedral domain, $\Omega \in \mathbb{R}^n, n\ge 2$.
Let $\Lambda^k(\Omega)$ be the space of smooth $k$-forms on $\Omega$, 
and $L^2\Lambda^k(\Omega)$ be the completion of $\Lambda^k(\Omega)$ with respect to the $L^2$ inner-product.
For $k=n$, the space of harmonic forms in $L^2 \Lambda^n(\Omega)$ has no nonzero element, i.e.~$\mathfrak{H}^{n}=\{0\}$, which simplifies the 
analysis in our case of interest $k = n$. However, $\sigma-\sigma_h$ is contained in $H\Lambda^{k-1}(\Omega)$, which generally contains a nontrivial harmonic component.
Note that the convergence and optimality results in~\cite{CHX} hold only for simply connected polygons in $\mathbb{R}^2$, therefore $\mathfrak{H}^{n-1}=\{0\}$ is also true in the case $k=n=2$.

\subsubsection*{ The de Rham complex }
  
Let $d$ be the exterior derivative acting as an operator from $L^2\Lambda^k(\Omega)$ to $L^2\Lambda^{k+1}(\Omega)$. We still use $\langle\cdot,\cdot\rangle$ and $\langle\cdot,\cdot\rangle_{V}$ to denote the $L^{2}$- and $V$-inner products respectively on the de Rham complex. This forms a Hilbert complex $(L^2\Lambda(\Omega), d)$, with domain complex $(H\Lambda(\Omega),d)$, where $H\Lambda^k(\Omega)$ is the set of elements in $L^2\Lambda^k(\Omega)$ with exterior derivatives in $L^2\Lambda^{k+1}(\Omega)$. The domain complex can be described with the following diagram
\begin{equation}
H\Lambda^0( \Omega) \xrightarrow{d}H\Lambda^1( \Omega) \xrightarrow{d} \cdots  \rightarrow    H\Lambda^{n-1}( \Omega ) \xrightarrow{d} L^2\Lambda^n(\Omega).
\end{equation}
It can be shown that the compactness property is satisfied, and therefore the prior results shown on abstract Hilbert complexes can be applied. 
  
The importance of the adjoint operator is clear by the first equation of the mixed Hodge Laplacian problem.  Defining the coderivative operator, $\delta: \Lambda^k(\Omega) \rightarrow \Lambda^{k-1}(\Omega)$, and two particular spaces, will be helpful in understanding the adjoint operator on the de Rham complex.
\begin{align*}
\star \delta \omega &= (-1)^k d \star \omega,\label{delta} \\
\mathring{H}\Lambda^k(\Omega) &= \{ \omega \in H\Lambda^k(\Omega) : \text{  tr }\omega = 0 \text{ on }\partial\Omega\}, \\
\mathring{H}^{*}\Lambda^k(\Omega) &:= \star \mathring{H}\Lambda^{n-k}.
\end{align*}
Combining $\delta$ with Stokes' theorem gives a useful version of integration by parts
\begin{equation}
\langle d \omega, \mu \rangle = \langle \omega, \delta \mu \rangle + \int_{\partial\Omega} \text{tr } \omega \wedge \text{tr} \star \mu, \hspace{.2cm}
 \omega \in \Lambda^{k-1}(\Omega), \text{ }\mu \in \Lambda^k(\Omega).
\end{equation}
The following result uses the above concepts and is helpful in understanding the mixed Hodge Laplace problem on the de Rham complex.
\begin{theorem}\label{afw41}(Theorem 4.1 from~\cite{AFW10})
Let d be the exterior derivative viewed as an unbounded operator $L^2\Lambda^{k-1}(\Omega) \rightarrow L^2\Lambda^k(\Omega)$ with domain $H\Lambda^k(\Omega)$.  The the adjoint d$^*$, as an unbounded operator $L^2\Lambda^k(\Omega) \rightarrow L^2\Lambda^{k-1}(\Omega)$, has $\mathring{H}^{*}\Lambda^k(\Omega) $ as its domain and coincides with the operator $\delta$.
\end{theorem}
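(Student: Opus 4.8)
The plan is to show that the two operators $d^*$ and $\delta$ have exactly the same graph, by proving the two inclusions separately. Recall the abstract definition of the adjoint: a form $\mu \in L^2\Lambda^k(\Omega)$ lies in $\operatorname{dom}(d^*)$ precisely when there is some $w \in L^2\Lambda^{k-1}(\Omega)$ with $\langle d\omega,\mu\rangle = \langle \omega, w\rangle$ for all $\omega \in H\Lambda^{k-1}(\Omega)$, and then $d^*\mu = w$. The only analytic input I would need, beyond the integration-by-parts identity already recorded above, is that on a bounded Lipschitz domain the forms smooth up to $\partial\Omega$ are dense in $H\Lambda^m(\Omega)$ in the graph norm; this density lets me upgrade the identity $\langle d\omega,\mu\rangle = \langle \omega,\delta\mu\rangle + \int_{\partial\Omega}\operatorname{tr}\omega\wedge\operatorname{tr}\star\mu$ from smooth $\omega$ to the full space $H\Lambda^{k-1}(\Omega)$, provided $\star\mu \in H\Lambda^{n-k}(\Omega)$ so that both sides make sense.

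\textbf{Step 1: $\mathring{H}^{*}\Lambda^k(\Omega) \subset \operatorname{dom}(d^*)$ with $d^* = \delta$ there.} First I would take $\mu \in \mathring{H}^{*}\Lambda^k(\Omega)$, i.e.\ $\star\mu \in \mathring{H}\Lambda^{n-k}(\Omega)$. Then $d(\star\mu) \in L^2$, so by \eqref{delta} $\delta\mu \in L^2\Lambda^{k-1}(\Omega)$, and $\operatorname{tr}_{\partial\Omega}(\star\mu) = 0$. For smooth $\omega$ the integration-by-parts formula collapses to $\langle d\omega,\mu\rangle = \langle \omega,\delta\mu\rangle$, since the boundary term carries the factor $\operatorname{tr}\star\mu = 0$. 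Both sides are continuous in $\omega$ for the $H\Lambda^{k-1}$ norm, so by density the identity holds for all $\omega \in H\Lambda^{k-1}(\Omega)$; hence $\mu \in \operatorname{dom}(d^*)$ with $d^*\mu = \delta\mu$.

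\textbf{Step 2: $\operatorname{dom}(d^*) \subset \mathring{H}^{*}\Lambda^k(\Omega)$.} Conversely, let $\mu \in \operatorname{dom}(d^*)$ and set $w = d^*\mu$. Testing the defining identity only against compactly supported smooth $\omega$ shows that $w$ is the distributional coderivative of $\mu$; equivalently $\star\mu$ has distributional exterior derivative in $L^2$, so $\star\mu \in H\Lambda^{n-k}(\Omega)$ and $\delta\mu = w$. It remains to show $\operatorname{tr}_{\partial\Omega}(\star\mu) = 0$. Now that $\star\mu \in H\Lambda^{n-k}(\Omega)$, the integration-by-parts identity is valid for every $\omega \in H\Lambda^{k-1}(\Omega)$, and its left-hand side $\langle d\omega,\mu\rangle - \langle \omega,\delta\mu\rangle = \langle \omega,w\rangle - \langle \omega,w\rangle$ vanishes identically. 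Therefore $\int_{\partial\Omega}\operatorname{tr}\omega\wedge\operatorname{tr}\star\mu = 0$ for every $\omega \in H\Lambda^{k-1}(\Omega)$, and since the traces of such $\omega$ fill out a dense subspace of the boundary trace space and the boundary pairing is nondegenerate, I conclude $\operatorname{tr}\star\mu = 0$, i.e.\ $\star\mu \in \mathring{H}\Lambda^{n-k}(\Omega)$ and $\mu \in \mathring{H}^{*}\Lambda^k(\Omega)$.

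\textbf{Main obstacle.} The algebraic bookkeeping with $\star$, $d$, $\delta$ via \eqref{delta}, and the assembly of the two inclusions, are routine. The real work is the boundary trace theory for $L^2$ differential forms on a merely Lipschitz domain: density of forms smooth up to the boundary in $H\Lambda^m(\Omega)$, existence of a continuous trace map on $H\Lambda^m(\Omega)$ together with the validity of the integration-by-parts formula for non-smooth forms, and, finally, the nondegeneracy of the pairing $(\operatorname{tr}\omega,\operatorname{tr}\star\mu)\mapsto\int_{\partial\Omega}\operatorname{tr}\omega\wedge\operatorname{tr}\star\mu$, which is exactly what turns ``all the pairings vanish'' into ``$\operatorname{tr}\star\mu = 0$''. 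I would either cite these facts from the literature on $H(\operatorname{div})$ and $H(\operatorname{curl})$ spaces and their exterior-calculus generalizations, or, in the concrete de Rham setting, reduce them to the familiar scalar and vector-field statements.
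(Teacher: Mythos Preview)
Your proposal is correct and outlines the standard argument for this adjoint characterization. However, the paper does not prove this result at all: it is stated as Theorem~4.1 of \cite{AFW10} and simply cited, so there is no ``paper's own proof'' to compare against. Your two-inclusion argument via integration by parts, density of smooth forms in $H\Lambda^{k-1}(\Omega)$, and nondegeneracy of the boundary trace pairing is exactly the route taken in \cite{AFW10}, and you have correctly identified that the analytic substance lies in the Lipschitz-domain trace theory rather than in the algebra.
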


Applying the results from the previous section and Theorem \ref{afw41}, we obtain the mixed Hodge Laplacian problem in the de Rham complex: find $(\sigma,u,p) \in H\Lambda^{k-1}(\Omega) \times H\Lambda^k(\Omega) \times \mathfrak{H}^k$ such that
\begin{equation}\label{GHDR}
\begin{aligned} 
u \perp\mathfrak{H}^k,\quad\sigma&=\delta u,\quad d\sigma + \delta d u + p= f \quad&&\text{ in } \Omega, \\
\tr\star u&= 0,\quad\tr\star du = 0\quad&&\text{ on } \partial\Omega.
\end{aligned}
\end{equation}
Using proxy fields and symmetric properties of the problem, a generic method for solving \eqref{GHDR} in the case ${k=n}$ equivalently solves Poisson's equation with homogeneous Dirichlet boundary condition. In this case $du$ = 0 and $\mathfrak{H}^n = \{0\}$. Hence the mixed Hodge Laplacian simplifies to: find $(\sigma,u) \in H\Lambda^{n-1}(\Omega) \times H\Lambda^n(\Omega)$ such that
\begin{equation}\label{HLDRC}
\begin{array}{cl} 
\sigma=\delta u,\quad d\sigma  = f &\text{ in } \Omega, \\
\text{tr} \star u = 0, &\text{ on } \partial\Omega. 
\end{array}
\end{equation}

We use $(V(\Th),d)$ [corresponds to $(V_h,d)$] to denote a finite dimensional subcomplex of $(H\Lambda,d)$ on the mesh $\Th$. Then the discrete problem \eqref{DHL} is to find  $(\sigma_h,u_h) \in V^{n-1}(\Th)\times V^n(\Th)$, such that
\begin{equation}\label{HLDDRC}
\begin{aligned}
\ab{\sigma_h}{\tau}-\ab{d\tau}{u_h}&=0,\quad&&\tau\in V^{n-1}(\Th),\\
\ab{d\sigma_h}{v}&=\ab{f}{v},&&v\in V^n(\Th).
\end{aligned}
\end{equation}
Let $\delta_h$ be the adjoint of $d: V^{n-1}(\Th)\rightarrow V^n(\Th)$, and $f_{\Th}$ be the $L^2$-projection of $f$ onto $V^{n}(\Th)$. \eqref{HLDDRC} is equivalent to $\sigma_h=\delta_h v_h,$ $ d\sigma_h=f_{\Th}$. Note that $\sigma\in\mathfrak{Z}_h^\perp$ and $\sigma\in\mathfrak{Z}^\perp$.

\subsubsection*{ Finite element differential forms }
Given a shape regular, conforming simplicial triangulation $\Th$ of $\Omega$, we set $h_T :=|T|^{\frac{1}{n}}$ for an element $T \in \mathcal{T}_h$, where $|T|$ is the volume of $T$. The finite element space $V^k(\Th)\subset H\Lambda^k(\Omega)$ is a space of $k$-forms with piecewise polynomial coefficients, 
\begin{align}
V^{n-1}(\Th)\times V^n(\Th)&=\mathcal{P}^-_{r+1}\Lambda^{n-1}\times\mathcal{P}_r\Lambda^n(\Th),\label{RT}\\
V^{n-1}(\Th)\times V^n(\Th)&=\mathcal{P}_{r+1}\Lambda^{n-1}\times\mathcal{P}_r\Lambda^n(\Th),\label{BDM}
\end{align}
$r\geq0$. In fact, $\mathcal{P}_r\Lambda^k(\Th)$ consists of $k$-forms with piecewise polynomial coefficients of degree $r$. $\mathcal{P}^-_r\Lambda^k(\Th)$ is in a special Koszul complex. Pairs \eqref{RT} and \eqref{BDM} are generalizations of RT and BDM elements respctively in FEEC. For a detailed discussion on these spaces, see \cite{AFW10}.  

\subsubsection*{ Bounded Cochain Projections}
Bounded cochain projections and their approximation properties are necessary in the analysis of both uniform and adaptive FEMs in the FEEC framework. We will use frequently the following two operators: the smoothed projection $\pi_h: L^{2}\Lambda^{k}(\Omega)\rightarrow V^{k}(\Th)$ from \cite{CW}, and the commuting quasi-interpolation $\Pi_h: L^{2}\Lambda^{k}(\Omega)\rightarrow V^{k}(\Th)$ as defined in \cite{DH} with ideas similar to \cite{sch08}.

In the remainder of the paper, $C$ will be a generic constant which is dependent only on $\Omega$ and the shape regularity of the underlying mesh . We use $\ab{\cdot}{\cdot}_{\Omega_0}$ to denote the $L^2$ inner product restricted to $\Omega_0$. $\| \cdot \|$ will denote the $L^2\Lambda^k(\Omega)$ norm, and when taken on specific elements of the domain $T$ and $\partial T$, we write $\|\cdot\|_T$ and $\|\cdot\|_{\partial T}$ respectively.  For all other norms, such as $H\Lambda^k(\Omega)$ and $H^1\Lambda^k(\Omega)$, we write $\|\cdot\|_{H\Lambda^k(\Omega)}$ and $\|\cdot\|_{H^1\Lambda^k(\Omega)}$ respectively.

The next lemma is taken directly from Lemma 6 in \cite{DH}, and will be a key tool in developing an upper bound for the error.

\begin{lemma}\label{dhi}

Assume 1 $\le k \le $ n, and $\phi \in H\Lambda^{k-1}(\Omega)$ with $\| \phi \|_{H\Lambda^{k-1}(\Omega)}\le 1.$ 
 Then there exists $\varphi \in H^1 \Lambda^{k-1}(\Omega)$ such that
 $d\varphi = d\phi, \Pi_h d \phi = d\Pi_h \phi = d\Pi_h \varphi$, and

\begin{equation*}
 \displaystyle\sum\limits_{T \in T_h}  h_T^{-2} \| \varphi - \Pi_h\varphi \|_{T} ^2 +   h_T^{-1} \| \emph{tr} ( \varphi - \Pi_h\varphi ) \|_{\partial T }^2 \le C.
\end{equation*}
\end{lemma}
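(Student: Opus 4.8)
\noindent\emph{Proof strategy.} This is exactly the approximation estimate of \cite{DH}, and the plan is to obtain $\varphi$ from a \emph{regular decomposition} of $\phi$ and then estimate the interpolation error of $\Pi_H$ element by element. The first step is to appeal to a regular decomposition on the bounded Lipschitz polyhedron $\Omega$ (a regularized Poincar\'e / regular-potential construction of Costabel--McIntosh type, as used in \cite{DH}): since $\phi \in H\Lambda^{k-1}(\Omega)$, there are $\varphi \in H^1\Lambda^{k-1}(\Omega)$ and $\rho \in H^1\Lambda^{k-2}(\Omega)$ with
\begin{equation*}
\phi = \varphi + d\rho, \qquad \|\varphi\|_{H^1\Lambda^{k-1}(\Omega)} \le C ,
\end{equation*}
with $C$ independent of the mesh (equivalently, one may apply a regular potential directly to the exact form $d\phi \in L^2\Lambda^k(\Omega)$ to produce such a $\varphi$). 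Because $d(d\rho)=0$ this gives $d\varphi = d\phi$ immediately, and the two commutation identities are then automatic from $\Pi_H$ being a cochain projection: $\Pi_H d\phi = d\Pi_H\phi$, and $d\Pi_H\varphi = \Pi_H d\varphi = \Pi_H d\phi$.

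The second step is to invoke the element-local approximation properties of the commuting quasi-interpolant $\Pi_H$ recorded in \cite{DH} (in the spirit of \cite{sch01,sch08,cw08}). For each element $T$, with $h_T = \mathrm{diam}\,T$ and $\omega_T$ the union of the elements meeting $T$, these give
\begin{equation*}
\|\varphi - \Pi_H\varphi\|_T \le C h_T\,\|\varphi\|_{H^1\Lambda^{k-1}(\omega_T)}, \qquad \|\mathrm{tr}(\varphi - \Pi_H\varphi)\|_{\partial T} \le C h_T^{1/2}\,\|\varphi\|_{H^1\Lambda^{k-1}(\omega_T)} .
\end{equation*}
Squaring, multiplying by $h_T^{-2}$ and $h_T^{-1}$ respectively, and summing over $T$ — using that shape-regularity makes the patches $\{\omega_T\}$ overlap with bounded multiplicity — yields
\begin{equation*}
\sum_{T} h_T^{-2}\|\varphi - \Pi_H\varphi\|_T^2 + h_T^{-1}\|\mathrm{tr}(\varphi - \Pi_H\varphi)\|_{\partial T}^2 \le C\,\|\varphi\|_{H^1\Lambda^{k-1}(\Omega)}^2 \le C ,
\end{equation*}
which is the claim.

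I expect the main obstacle to be the first step on a domain of nontrivial topology. On a star-shaped (or convex) domain the regular decomposition with a uniform constant is classical, but on an arbitrary bounded Lipschitz polyhedron it must be assembled from local regularized Poincar\'e operators by a partition of unity, and one has to verify that the patched operator still reproduces the exact form $d\phi$ exactly; this is the place where the nontrivial harmonic forms $\mathfrak{H}^{k-1}$, and hence the topology of $\Omega$, intervene. A secondary technical point is that the local estimates for $\Pi_H$ must hold with constants independent of the (adaptively generated, hence shape-regular but not quasi-uniform) family of meshes, so that the final constant $C$ does not degrade under refinement. Since the statement is quoted from \cite{DH}, both points may simply be cited there; the outline above records the mechanism.
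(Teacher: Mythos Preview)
Your proposal is correct and in fact gives considerably more detail than the paper itself, which simply refers the reader to Lemma~6 in \cite{DH} without further argument. The mechanism you describe---regular decomposition to produce $\varphi\in H^1\Lambda^{k-1}(\Omega)$ with $d\varphi=d\phi$, followed by the local approximation estimates for the commuting quasi-interpolant $\Pi_H$ and a finite-overlap summation---is precisely the content of that cited lemma, so your outline matches the intended proof.
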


\section{Error Estimator}
\label{sec:estimator}
For $T \in \Th$, let $\lr{\tau}$ denote the jump of $\tau$ over an element face. For element faces on $\partial \Omega$ we set $\lr{\tau} = \tau$. The element error indicator is defined as
\begin{equation*}
\eta_{\Th}^2(\sigma_h,T) =  h_T \|\lr{\tr\star\sigma_h}\|_{\partial T}^2 +  h_T^2\|  \delta \sigma_h\|^2_T+  h_T^2\|  f - f_{\Th}\|^2_T.
\end{equation*}

For a subset $\mathcal{M}\subseteq\Th$, define
\begin{align*}
\eta_{\Th}^2( \sigma_h,\mathcal{M})&:=\sum_{T \in \mathcal{M}}\eta_{\Th}^2(\sigma_h,T),\\
\osc_{\Th}^2(f,\mathcal{M})&:=\sum_{T\in\mathcal{M}}h_T^2\|f-f_{\Th}\|^2_T.  
\end{align*}

The Hodge decomposition is crucial to proving global reliability of $\eta_{\Th}$. By $\sigma\in\mathfrak{Z}^\perp$ and $\sigma_h\in\mathfrak{Z}_h^\perp$, the Hodge decomposition of $\sigma - \sigma_h$ can be written as 
\begin{equation}\label{Hodgesigma}
\begin{aligned}
\sigma-\sigma_h&=P_{\mathfrak{B}}(\sigma-\sigma_h)+P_{\mathfrak{H}}(\sigma-\sigma_h)+P_{\mathfrak{Z}^\perp}(\sigma-\sigma_h)\\
&=(\sigma-P_{\mathfrak{Z}^\perp}\sigma_h)-P_{\mathfrak{B}}\sigma_h-P_{\mathfrak{H}}\sigma_h.
\end{aligned}
\end{equation}
Lemmas \ref{part1}, \ref{part2} and \ref{part3} will bound each portion of this orthogonal decomposition.  

\begin{lemma}\label{part1}
\begin{equation*} \|\sigma-P_{\mathfrak{Z}^\perp}\sigma_h\|\leq C\osc_{\Th}(f,\Th). 
\end{equation*}
\end{lemma}

\begin{proof}
Since $\sigma-P_{\mathfrak{Z}^\perp}\sigma_h\in\mathfrak{Z}^{n-1\perp}=\mathfrak{B}_{n-1}^*$, 
$\sigma-P_{\mathfrak{Z}^\perp}\sigma_h=\delta v$ for some $v\in\text{Dom}(\delta)=H_0^1(\Omega)$.
Thus
\begin{equation*}
\begin{aligned}
\|\sigma-P_{\mathfrak{Z}^\perp}\sigma_h\|^2&=\ab{\sigma-P_{\mathfrak{Z}^\perp}\sigma_h}{\delta v}=\ab{d\sigma-d\sigma_h}{v}.
\end{aligned}
\end{equation*}
Then by $\sum_{T\in\Th}h_T^{-2}\|v-v_{\Th}\|_T^2\leq C\|\delta v\|^2$, we obtain
\begin{equation*}
\begin{aligned}
\|\sigma-P_{\mathfrak{Z}^\perp}\sigma_h\|^2=\ab{f-f_{\Th}}{v-v_{\Th}}
\leq C\osc_{\Th}(f,\Th)\|\delta v\|.
\end{aligned}
\end{equation*}
The proof is complete.
\end{proof}


The next lemma uses the quasi-interpolant $\Pi_h$ described in \cite{DH}, and also applies integration by parts in the same standard fashion that \cite{DH} use when bounding error measured in the natural norm, $\|u - u_h\|_{H\Lambda^k(\Omega)} + \|\sigma - \sigma_h\|_{H\Lambda^{k-1}(\Omega)} + \|p - p_h\|$.  In~\cite{DH}, $\inf$-$\sup$ condition of the bilinear-form is used to separate components of the error, whereas here we simply analyze the orthogonal decomposition of $\sigma - \sigma_h$.  

\begin{lemma}\label{part2} 
\begin{equation*}
\|P_{\mathfrak{B}} \sigma_h\|\leq C\eta_{\Th}(\sigma_h,\Th).
\end{equation*}
\end{lemma}
\begin{proof} 
\begin{equation*}
\| P_{\mathfrak{B}} \sigma_h\| = \ab{\sigma_h}{P_{\mathfrak{B}}\sigma_h/\| P_{\mathfrak{B}} \sigma_h \|}=\ab{-\sigma_h}{d\phi}, \quad\phi\in(\mathfrak{Z}^{k-2})^{\perp V}. 
\end{equation*}
Since $\phi$ can then be replaced with $\varphi$ satisfying the properties of Lemma \ref{dhi}, and noting $\sigma_h\perp\mathfrak{B}^{k-1}_h$,
\begin{equation}\label{dB}
\|P_{\mathfrak{B}}\sigma_{h}\|= \ab{- \sigma_h}{d (\varphi - \Pi_h\varphi)}.
\end{equation}
The problem is now reduced to a case handled in \cite{DH}, when they bound a portion of their $\eta_{-1}$ estimator.  We follow their ideas to complete to proof.  Applying the integration by parts formula we have
\begin{equation*}
 \|P_{\mathfrak{B}}\sigma_{h}\|=\sum_{T\in\Th}\int_{\partial T}(\tr\star \sigma_h\wedge\tr(\varphi - \Pi_h\varphi ) ) +\ab{\delta\sigma_h}{\varphi - \Pi_h\varphi}_T. 
 \end{equation*}
Noting tr($\varphi - \Pi_h\varphi$) is single-valued on the element boundaries, this can be reduced to 
\begin{align*}
\|P_{\mathfrak{B}}\sigma_{h}\|&\leq C\sum\limits_{T \in\Th}   \|\tr( \varphi - \Pi_h\varphi ) \|_{\partial T } \|\lr{\tr\star \sigma_h}\|_{\partial T } + \| \varphi - \Pi_h\varphi \|_{T} \| \delta \sigma_h\|_{T }\\
&\leq C\sum_{T \in\Th} \big(h_T^{\frac{1}{2}}\|\lr{\tr\star \sigma_h}\|_{\partial T } +  h_T\| \delta \sigma_h\|_{T }\big)\\
&\quad\times\big( h_T^{-\frac{1}{2}} \|\tr( \varphi - \Pi_h\varphi ) \|_{\partial T} + h_T^{-1}\| \varphi - \Pi_h\varphi \|_{T}\big)\\
&\leq C \eta_{\Th}(\sigma_h, \Th)\big(\sum_{T\in\Th} h_T^{-1} \|\tr( \varphi - \Pi_h\varphi ) \|^2_{\partial T } + h_T^{-2}\| \varphi - \Pi_h\varphi \|^2_{T}\big)^{1/2}.
\end{align*}
The proof is then complete by applying the bounds from Lemma \ref{dhi}, and the Poincar\'e inequality $\| \phi \|_{H\Lambda^{k-1}}\leq C\|d\phi\|=C.$ 
\end{proof}

To control the harmonic component in the Hodge decomposition, we need to estimate the gap between $\mathfrak{H}$ and $\mathfrak{H}_{h}$. To this end, we use equation (28) in \cite{AFW10}:
\begin{equation}\label{hIntAFW}  
\|(I-P_{\mathfrak{H}^k})q \|_V \le \| (I - \pi_h^k)P_{\mathfrak{H}^k}q \|_V, \hspace{.5cm} q\in \mathfrak{H}_h^k.
\end{equation}
We mention that $\|\tilde{q}\|=\|\tilde{q}\|_{V}$ for any $\tilde{q}\in\mathfrak{H}^{k}$ or $\mathfrak{H}_{h}^{k}$. 
Combining \eqref{hIntAFW} with a triangle inequality gives
\begin{equation}\label{AFW3}
\|q\|  \le (\| (I - \pi_h^k)\|+1) \|P_{\mathfrak{H}^k}q \|\leq C\|P_{\mathfrak{H}^k }q \|, \hspace{.5cm} q\in \mathfrak{H}_h^k.
\end{equation}
Theorem \ref{chdt} will be essential in dealing with the harmonic forms in the proof of a continuous upper-bound.  The corollary will be used identically when proving a discrete upper-bound.  For use in our next two results we introduce an operator $\delta$ and one of its important properties.  Let $A, B$ be $n < \infty$ dimensional, closed subspaces of a Hilbert space $W$, and let
\begin{equation*}
\delta( A, B ) = \sup_{x\in A,\ \|x\| = 1} \|x - P_Bx \|,
\end{equation*}
then \cite{DH}, Lemma 2 which takes the original ideas from \cite{KATO}, shows
\begin{equation}\label{DHL2}
\delta(A,B) = \delta(B, A ).
\end{equation}

\begin{theorem}\label{chdt}
For $1\leq k\leq n-1$,
\begin{equation*}
 \delta( \mathfrak{H}^k,\mathfrak{H}^k_h)=\delta( \mathfrak{H}^k_h, \mathfrak{H}^k ) \leq C<1.
\end{equation*}
\end{theorem}
\begin{proof}
 $\text{dim}\mathfrak{H}^k_H=\text{dim}\mathfrak{H}^k=\beta_{k}$, the $k$th Betti number of the domain $\Omega$. Then we can apply \eqref{DHL2} to prove the equality. By \eqref{AFW3} and the orthogonality of the $L^{2}$-projection, we have
\begin{align*}
\delta( \mathfrak{H}^k_h, \mathfrak{H}^k ) &= \sup_{q\in \mathfrak{H}^k_h,\ \|q\|=1 } \| q - P_{\mathfrak{H}} q \|\\
&= \sup_{q\in \mathfrak{H}^k_h,\ \|q\|=1 } \sqrt{1-\|P_{\mathfrak{H}} q \|^{2}}\\
&\leq\sqrt{ 1- \frac{1}{C^2} } < 1.
\end{align*}
The proof is complete.
\end{proof}

\begin{corollary}\label{dhdc}
Let $\Th$ be a conforming refinement of $\mathcal{T}_{H}$. Then 
\begin{equation*} 
 \delta( \mathfrak{H}_h^k, \mathfrak{H}_H^k ) = \delta( \mathfrak{H}_H^k, \mathfrak{H}_h^k ) \le C< 1 .
\end{equation*}
\begin{proof}
The proof follows the same logic as Theorem \ref{chdt}. The only difference is replacing
\eqref{hIntAFW}  
by
\begin{equation*}
\|(I-P_{\mathfrak{H}_{h}})q\|_V\leq\|(I-\pi_H^k)P_{\mathfrak{H}_{h}}q \|_V,\quad q\in\mathfrak{H}_{H}^k,
\end{equation*}
which can be derived by following the proof of \eqref{hIntAFW}. 
\end{proof}
\end{corollary}

\begin{lemma}\label{part3}
\begin{equation*}
\|P_{\mathfrak{H}} \sigma_h\|\leq C_{\mathfrak{H}}\|\sigma-\sigma_h\|, \quad C_{\mathfrak{H}}< 1.
\end{equation*}
\end{lemma}
\begin{proof}  Since $\sigma\perp \mathfrak{Z}^{k-1}$ and $\sigma_h \perp \mathfrak{Z}_h^{k-1}$, we have
\begin{align*}
\|P_{\mathfrak{H}}\sigma_h\|&= \sup _{q \in \mathfrak{H}, \|q\| = 1}\langle\sigma_h, q - P_{\mathfrak{H}_h}q\rangle \\
&= \sup _{q \in \mathfrak{H}, \|q\| = 1}\langle\sigma_h -\sigma, q - P_{\mathfrak{H}_h}q\rangle \\
&\le\delta( \mathfrak{H}^{n-1},\mathfrak{H}^{n-1}_h)  \| \sigma_h- \sigma\| .
\end{align*}
Then Lemma \ref{part3} follows from Theorem $\ref{chdt}$.
\end{proof}

Now we are in a position to prove the continuous upper bound.
\begin{theorem}\label{upper}\emph{(continuous upper bound)} There exists a constant $C_1$, depending only on the shape regularity of $\Th$, such that
\begin{equation*} 
\| \sigma - \sigma_h \|^2\leq C_1\eta_{\Th}^2(\sigma_h,\Th). 
\end{equation*}
\end{theorem}
\begin{proof}
Starting from \eqref{Hodgesigma}, by Lemmas \ref{part1}, \ref{part2} and \ref{part3}, we have
\begin{align*}
 \| \sigma - \sigma_h\|&\leq\|\sigma -P_{\mathfrak{Z}^\perp}\sigma_h\|+\|P_{\mathfrak{H}}\sigma_{h}\|+\|P_{\mathfrak{B}}\sigma_{h}\|\\
&\leq\frac{1}{1-C_{\mathfrak{H}}}(\|\sigma-P_{\mathfrak{Z}^\perp}\sigma_h\|+\|P_{\mathfrak{B}}\sigma_h\|)\\
&\leq C_1\eta_{\Th}(\sigma_h,\mathcal{T}_h).
\end{align*}
The proof is complete.
\end{proof}

The efficiency can be proved by the standard bubble function technique in \cite{DH}. 
\begin{theorem}\label{lower}\emph{(lower bound)}
There exists a constant $C_2$ depending only on the shape regularity of $\Th$, such that
\begin{equation*} C_2\eta_{\Th}^2( \sigma_h, \mathcal{T}_h ) \le \| \sigma - \sigma_h \|^2+ \osc_{\Th}^2(f,\mathcal{T}_h ) . \end{equation*}
\end{theorem}

\section{Quasi-orthogonality}\label{sec:quasiortho}
The main difficulty for proving convergence of AMFEM is the failure of orthogonality. In \cite{CHX}, a quasi-orthogonality property is proven using a technical discrete stability result. In this section, we use a novel technique to prove a sharper quasi-orthogonality result on $\ab{\sigma-\sigma_h}{\sigma_h-\sigma_H}$. In the next lemma, we prove a discrete approximation result.
\begin{lemma}\label{disapprox}
Let $\Th$ be a conforming refinement of $\mathcal{T}_H$ and $P_H$ be the $L^2$ projection onto $\mathcal{P}_0\Lambda^n(\mathcal{T}_H)$. Then for any $T\in\mathcal{T}_H$ and $v_h\in V^n(\Th)$,    
\begin{equation*} 
\|v_h -P_H v_h\|_{T}\leq C h_T\|\delta_h v_h\|_{T}. 
\end{equation*}
\end{lemma}
\begin{proof}
We prove it by homogeneity argument. Suppose $\delta_h v_h=0$ on $T$. Let $\hat{\mathcal{T}}_h:=\{t\in\mathcal{T}_h: t\subset T\},$ and  $V^{n-1}(\hat{\mathcal{T}}_h):=\{\tau\in V^{n-1}(\mathcal{T}_h): \supp\tau\subseteq T\}$. Let $\mathcal{E}_h$ denote the set of faces of $\Th$,  $\hat{\mathcal{E}}_h$ the set of faces of $\mathcal{T}_h$ in the interior of $T$. Then for any $\tau_h\in V^{n-1}(\hat{\mathcal{T}}_h)$, 
\begin{equation*}
\ab{d\tau_{h}}{v_h}=\ab{\tau_{h}}{\delta_{h}v_h}=\ab{\tau_{h}}{\delta_{h}v_h}_T=0.
\end{equation*}
By element-wise integration by parts and the property of Hodge star, we have
\begin{equation}\label{test}
\begin{aligned}
0&=\sum_{t\in\hat{\mathcal{T}}_{h}}\int_{\partial t}\tr\tau_h\wedge\tr\star v_h+\ab{\tau_h}{\delta v_h}_{t}\\
&=\sum_{e\in\hat{\mathcal{E}}_h}\int_e\tr\tau_h\wedge\lr{\tr\star v_h}+\sum_{t\in\hat{\mathcal{T}}_{h}}\int_{t}\tau_h\wedge\star\delta v_h.
\end{aligned}
\end{equation}
In the last equality, we use $\ab{\tau_h}{\delta v_h}_t=\ab{\star\tau_h}{\star\delta v_h}_t=\int_t\tau_h\wedge\star\delta v_h$. First assume that
$$V^{n-1}(\Th)\times V^n(\Th)=\mathcal{P}_{r+1}^{-}\Lambda^{n-1}(\Th)\times\mathcal{P}_{r}\Lambda^{n}(\Th),\quad r\geq0$$ 
is the generalized RT pair. Then $v_h\in \mathcal{P}_r\Lambda^n(\Th).$ The degrees of freedom for $V^{n-1}(\Th)$ (cf.\cite{AFW06}) are given by
\begin{align}
&\int_{e}\tr\tau_h\wedge\mu,\quad e\in\mathcal{E}_h,\quad\mu\in\mathcal{P}_{r}\Lambda^{0}(e),\label{degree1}\\
&\int_{t}\tau_h\wedge\mu,\quad t\in\Th,\quad\mu\in\mathcal{P}_{r-1}\Lambda^{1}(t).\label{degree2}
\end{align}
Corresponding to the above degrees of freedom, let 
$$\big(\bigcup_{e\in\Eh}\{\tau_{e}^1,\cdots,\tau_{e}^{m_e}\}\big)\bigcup\big(\bigcup_{t\in\Th}\{\tau_{t}^1,\cdots,\tau_{t}^{m_t}\}\big)$$ 
be a dual basis, where $m_e, m_t\geq0$ are integers depending on $e$ and $t$. In particular, 
$$\int_{e^\prime}\tr\tau_e^i\wedge\mu=0,\quad\text{ for all }\mu\in\mathcal{P}_{r}\Lambda^{0}(e^\prime)\text{ and }e^\prime\neq e,$$ 
and degrees of freedom of $\tau_f^i$ in \eqref{degree2} vanish for all $t\in\Th$; 
$$\int_{t^\prime}\tau_t^j\wedge\mu=0,\quad\text{ for all }\mu\in\mathcal{P}_{r-1}\Lambda^{1}(t^\prime)\text{ and }t^\prime\neq t,$$
and degrees of freedom of $\tau_t^j$ in \eqref{degree1} vanish for all $e\in\Eh$. 

For $e\in\hat{\mathcal{E}}_h$, let $\tau_{h}$ in \eqref{test} go through $\tau_e^1,\cdots,\tau_e^{m_e}$. Since $\lr{\tr\star v_h}|_{e^\prime}\in\mathcal{P}_{r}\Lambda^{0}(e^\prime)$ and $\star\delta v_h|_{t^\prime}\in\mathcal{P}_{r-1}\Lambda^{1}(t^\prime),$ we have
$$\int_e\tr\tau_e^i\wedge\lr{\tr\star v_h}=0,\quad i=1,\cdots,m_e.$$
Then by $\lr{\tr\star v_h}|_e\in\mathcal{P}_{r}\Lambda^{0}(e)$ and unisolvence, $\lr{\tr\star v_h}=0$ on any $e\in\hat{\mathcal{E}}_h$ and thus $v_h$ is continuous on $T$. 

On the other hand, for $t\in\hat{\mathcal{T}}_h$, let $\tau_{h}$ in \eqref{test} go through $\tau_t^1,\cdots,\tau_t^{m_t}$. For the same reason, we have
$$\int_t \tau_t^j\wedge\star\delta v_h=0,\quad j=1,\cdots,m_t.$$
Then by $\star\delta v_h|_t\in\mathcal{P}_{r-1}\Lambda^1(t)$ and unisolvence, $\delta v_h=\star\delta v_h=0$ on any $t\in\hat{\mathcal{T}}_h$ and thus $v_h$ is a piecewise constant in $T$. 

Hence $v_h$ is a constant on $T$. In summary, $\delta_h v_h=0$ on $T$ implies $v_h-P_H v_h=0$ on $T$. \eqref{disapprox} then follows from the Bramble--Hilbert lemma, affine equivalence between $T$ and a reference triangle, and the shape regularity of $\Th$. The same argument applies to the generalized BDM pair 
$$V^{n-1}(\Th)\times V^n(\Th)=\mathcal{P}_{r+1}\Lambda^{n-1}(\Th)\times\mathcal{P}_{r}\Lambda^{n}(\Th),\quad r\geq0.$$ 
The proof is complete.
\end{proof} 

The quasi-orthogonality result is a direct corollary of Lemma \ref{disapprox}.
\begin{theorem}\label{quasiortho}  
Let $\mathcal{T}_h$ be a refinement of $\mathcal{T}_H$ and $\mathcal{R}_H$ be the set of refined elements in $\mathcal{T}_H$. Then
for any $\varepsilon > 0$,
\begin{equation*}
(1-\varepsilon)\|\sigma - \sigma_h\|^2 \le \| \sigma - \sigma_H \|^2 - \| \sigma_h - \sigma_H \|^2 + \frac{C_0}{\varepsilon}\osc_{\mathcal{T}_H}^2(f,\mathcal{R}_H).
\end{equation*}
\end{theorem}
\begin{proof}
By $\sigma\perp\mathfrak{Z}_{h}$ and $\sigma_h\perp\mathfrak{Z}_h$, we have
\begin{equation}\label{inner}
\begin{aligned}
|\ab{\sigma - \sigma_h}{\sigma_h - \sigma_H}|&=|\ab{\sigma - \sigma_h}{P_{\mathfrak{Z}_h^{\perp}}(\sigma_h - \sigma_H)}|\\
&\le \| \sigma - \sigma_h \| \|P_{\mathfrak{Z}_h^{\perp}}(\sigma_h - \sigma_H)\|.
\end{aligned}
\end{equation}
Since $P_{\mathfrak{Z}_h^{\perp}}(\sigma_h - \sigma_H)\in\mathfrak{Z}_{h}^{\perp}=\mathfrak{B}_{h}^*$, there exists some $v_h\in V^n(\Th)$, such that $P_{\mathfrak{Z}_h^{\perp}}(\sigma_h - \sigma_H)=\delta_h v_h$. Then by $\sigma_h-\sigma_H=d\delta_h v_h$ and $\mathcal{P}_0\Lambda^n(\Th)\subseteq V^n(\mathcal{T}_h)$,
\begin{equation}\label{interquasiortho}
\begin{aligned}
\|P_{\mathfrak{Z}_h^{\perp}}(\sigma_h - \sigma_H)\|^2&=\ab{\delta_h v_h}{\delta_h v_h}\\
&=\ab{d(\sigma_h - \sigma_H)}{v_h}\\
&=\ab{f_{\Th}-f_{\mathcal{T}_H}}{v_h-P_H v_h},\\
&=\ab{f-f_{\mathcal{T}_H}}{v_h-P_H v_h}.
\end{aligned}
\end{equation}
For $T\in\mathcal{T}_H\backslash\mathcal{R}_H$, $v_h=P_H v_h$ on $T$. Hence by \eqref{interquasiortho}, Cauchy--Schwarz inequality, and Lemma \ref{disapprox}, we have
\begin{equation}\label{estimatezperp}
\begin{aligned}
\|P_{\mathfrak{Z}_h^{\perp}}(\sigma_h - \sigma_H)\|^2&=\sum_{T\in\mathcal{R}_H}\int_{T}(f-f_{\mathcal{T}_H})(v_h-P_H v_h)\\
&\leq\osc_{\mathcal{T}_H}(f,\mathcal{R}_H)\left(\sum_{T\in\mathcal{R}_H}h_T^{-2}\|v_h-P_H v_h\|_T^2\right)^{\frac{1}{2}}\\
&\leq C_0^\frac{1}{2}\osc_{\mathcal{T}_H}(f,\mathcal{R}_H)\|\delta_h v_h\|.
\end{aligned}
\end{equation}
It then follows from \eqref{inner} and \eqref{estimatezperp} that
\begin{equation*}
    \begin{aligned}\|\sigma-\sigma_{h}\|^{2}&=\|\sigma-\sigma_{H}\|^{2}-\|\sigma_h-\sigma_{H}\|^{2}-2\langle\sigma-\sigma_{h},\sigma_{h}-\sigma_{H}\rangle\\
    &\leq\|\sigma-\sigma_{H}\|^{2}-\|\sigma_h-\sigma_{H}\|^{2}+\varepsilon\|\sigma-\sigma_{h}\|^2+\varepsilon^{-1}C_0\osc^2_{\mathcal{T}_H}(f,\mathcal{R}_H).
    \end{aligned}
\end{equation*}
The proof is complete.
\end{proof}
Comparing to the quasi-orthogonality 
\begin{equation}\label{CHXquasi}
(1-\varepsilon)\|\sigma - \sigma_h\|^2 \le \| \sigma - \sigma_H \|^2 - \| \sigma_h - \sigma_H \|^2 + \frac{C}{\varepsilon}\osc_{\mathcal{T}_H}^2(f,\mathcal{T}_H)
\end{equation}
proved in \cite{CHX}, Theorem \ref{quasiortho} is sharper because $\osc_{\mathcal{T}_H}(f,\mathcal{R}_H)\leq\osc_{\mathcal{T}_H}(f,\mathcal{T}_H).$ This improvement is crucial to the convergence analysis. Replacing $\osc_{\mathcal{T}_H}(f,\mathcal{T}_H)$ by $\osc_{\mathcal{T}_H}(f,\mathcal{R}_H)$ is motivated by the quasi-orthogonality result in \cite{BM} for the lowest order RT mixed method on simply connected polygon in $\mathbb{R}^2$. However, our technique is applicable to general domains in $\mathbb{R}^n$ and quite different from \cite{BM} as well as \cite{CHX}.

\section{Convergence and Optimality}
\label{sec:conv}
Given an initial triangulation, $\mathcal{T}_0$, the adaptive procedure will generate a nested sequence of triangulations $\mathcal{T}_\ell$ and discrete solutions $\sigma_\ell$ and $u_\ell$, by looping through the following steps:
$$
 \textsf{Solve} \longrightarrow \textsf{Estimate} \longrightarrow \textsf{Mark} \longrightarrow \textsf{Refine}$$
Our adaptive mixed finite element method is as follows.

$[\mathcal{T}_N, \sigma_N]=\textsf{AMFEM}(f,\mathcal{T}_0,\theta, \tol)$

Given an initial mesh $\mathcal{T}_0$, a marking parameter $0<\theta<1$, an error tolerance $\tol>0$. Set $\ell=0$, $\eta_{\ell}=\tol>0$. 

\textbf{WHILE} $\eta_\ell\geq\tol$, \textbf{DO}
\begin{center}
\begin{itemize}
\item[\qquad\textbf{1}.]Solve the discrete problem \eqref{HLDDRC} on $\mathcal{T}_\ell$ to obtain the solution $\sigma_\ell$.
\item[\qquad\textbf{2}.]For each $T\in\mathcal{T}_\ell$, compute $\eta_{\mathcal{T}_\ell}(\sigma_l,T)$ and $\eta_\ell=\eta_{\mathcal{T}_\ell}(\sigma_\ell,\mathcal{T}_\ell)$.
\item[\qquad\textbf{3}.]Select a subset $\mathcal{M}_\ell$ of $\mathcal{T}_\ell$ such that $\eta_{\mathcal{T}_l}(\sigma_\ell,\mathcal{M}_\ell)\geq\theta\eta_{\mathcal{T}_\ell}(\sigma_\ell,\mathcal{T}_\ell).$
\item[\qquad\textbf{4}.]Refine $\mathcal{T}_\ell$ and necessary neighboring simplices by newest vertex bisection to get a conforming $\mathcal{T}_{\ell+1}$. Set $\ell\leftarrow\ell+1$ and go to Step 1. 
\end{itemize}
\end{center}

\textbf{END DO}

$\mathcal{T}_N=\mathcal{T}_\ell,\quad$ $\sigma_N=\sigma_\ell$.

The simple newest vertex bisection can maintain the shape regularity of $\{\mathcal{T}_\ell\}$, i.e., $\mathcal{T}_\ell$ is shape regular and the shape regularity depends only on $\mathcal{T}_0$. Bounding the number of simplexes generated in mesh refinements is important in the proof of quasi optimality. By choosing a suitable initial labeling, Stevenson \cite{RS} showed that newest vertex bisection guarantees
\begin{align}\label{NVB}
\# \mathcal{T}_\ell\leq\#\mathcal{T}_0 + C\sum_{i=0}^{\ell-1}\#\mathcal{M}_i.
\end{align}

\subsection{ Convergence of \textsf{AMFEM}}
This subsection is devoted to convergence analysis of \textsf{AMFEM}. The results in this section follow ideas already in the literature \cite{CKNS,CHX,BM}, with Theorem \ref{termination} building on these ideas by proving reduction in a total error using relationships between data oscillation and reduction of a second type of total error. The following notation will be used in the proofs and discussion of this section:
\begin{equation*}
\begin{aligned}
&e_\ell=\|\sigma- \sigma_\ell\|^2,
\quad
E_\ell=\|\sigma_{\ell+1}- \sigma_\ell\|^2, 
\quad 
\eta_\ell =\eta_{\mathcal{T}_\ell}^2(\sigma_\ell,\mathcal{T}_\ell),\\
&o_\ell= \osc^2(f,\mathcal{T}_\ell),\quad\hat{o}_\ell= \osc^2(f,\mathcal{R}_\ell).
\end{aligned}
\end{equation*}
where $\mathcal{R}_\ell$ is the set of refined elements in $\mathcal{T}_\ell$.

\begin{lemma}
\begin{equation}\label{cts}
\eta_{\ell+1}\leq\beta\eta_\ell+C_3 E_\ell,
\end{equation}
where $0<\beta<1$ and $C_3>0$ depend only on $\theta$ and $\mathcal{T}_0$.
\end{lemma}
\begin{proof}
The proof is similar to Corollary 3.4 in \cite{CKNS}. Since $\eta_\ell$ involves data oscillation, we sketch the proof here for clarity. Let $\eta_\ell=\hat{\eta}_{\ell}+o_\ell$, where $\hat{\eta}_{\ell}=\hat{\eta}^2_{\mathcal{T}_\ell}(\sigma_\ell,\mathcal{T}_\ell)$ is the standard estimator without data oscillation. Given $T\in\mathcal{T}_{\ell+1}$, using a Young's inequality with parameter $\delta_*>0$, we have
\begin{equation*}
\hat{\eta}^2_{\mathcal{T}_{\ell+1}}(\sigma_{\ell+1},T)\leq(1+\delta_*)\hat{\eta}^2_{\mathcal{T}_{\ell+1}}(\sigma_{\ell},T)+(1+\delta_*^{-1})C_{\mathcal{T}_0}\|\sigma_{\ell+1}-\sigma_{\ell}\|^2_T,
\end{equation*}
Summing over $T\in\mathcal{T}_{\ell+1}$ gives
\begin{equation*}
\hat{\eta}^2_{\mathcal{T}_{\ell+1}}(\sigma_{\ell+1},\mathcal{T}_{\ell+1})\leq(1+\delta_*)\hat{\eta}^2_{\mathcal{T}_{\ell+1}}(\sigma_{\ell},\mathcal{T}_{\ell+1})+(1+\delta_*^{-1})C_{\mathcal{T}_0}\|\sigma_{\ell+1}-\sigma_{\ell}\|^2,
\end{equation*}
and thus 
\begin{equation}\label{ctsinter1}
{\eta}_{\ell+1}\leq(1+\delta_*){\eta}^2_{\mathcal{T}_{\ell+1}}(\sigma_{\ell},\mathcal{T}_{\ell+1})+(1+\delta_*^{-1})C_{\mathcal{T}_0}E_\ell.
\end{equation}
For $T\in\mathcal{T}_\ell$, we use $\hat{\mathcal{T}}(T)=\{t\in\mathcal{T}_{\ell+1}: t\subset T\}.$ If $T\in\mathcal{M}_\ell$ is marked, 
\begin{equation}\label{ctsinter2}
\sum_{t\in\hat{\mathcal{T}}(T)}\hat{\eta}_{\mathcal{T}_{\ell+1}}^2(\sigma_\ell,t)\leq2^{-\frac{1}{n}}\hat{\eta}_{\mathcal{T}_\ell}^2(\sigma_\ell,T),
\end{equation}
see Corollary 3.4 in \cite{CKNS}; and
\begin{equation}\label{ctsinter3}
\sum_{t\in\hat{\mathcal{T}}(T)}\osc_{\mathcal{T}_{\ell+1}}^2(f,t)\leq2^{-\frac{2}{n}}\osc_{\mathcal{T}_\ell}^2(f,T),
\end{equation}
see Lemma \ref{oscreductionthm}. If $T\in\mathcal{T}_{\ell}\backslash\mathcal{M}_\ell$, we use $$\sum_{t\in\hat{\mathcal{T}}(T)}\hat{\eta}_{\mathcal{T}_{\ell+1}}(\sigma_\ell,t)\leq\hat{\eta}_{\mathcal{T}_\ell}(\sigma_\ell,T),\quad\sum_{t\in\hat{\mathcal{T}}(T)}\osc_{\mathcal{T}_{\ell+1}}(f,t)\leq\osc_{\mathcal{T}_\ell}(f,T).$$ Combining the above inequality with \eqref{ctsinter2} and \eqref{ctsinter3}, we obtain
\begin{equation}\label{ctsinter4}
\begin{aligned}
{\eta}^2_{\mathcal{T}_{\ell+1}}(\sigma_{\ell},\mathcal{T}_{\ell+1})&\leq{\eta}^2_{\mathcal{T}_{\ell}}(\sigma_{\ell},\mathcal{T}_{\ell}\backslash\mathcal{M}_\ell)+2^{-\frac{1}{n}}{\eta}^2_{\mathcal{T}_{\ell}}(\sigma_{\ell},\mathcal{M}_{\ell})\\
&={\eta}^2_{\mathcal{T}_{\ell}}(\sigma_{\ell},\mathcal{T}_{\ell})-\lambda{\eta}^2_{\mathcal{T}_{\ell}}(\sigma_{\ell},\mathcal{M}_{\ell}),
\end{aligned}
\end{equation}
where $\lambda=1-2^{-\frac{1}{n}}<1$. It then follows from \eqref{ctsinter1} and \eqref{ctsinter4} that
\begin{equation}\label{ctseta}
\begin{aligned}
\eta_{\ell+1}\leq(1+\delta_{*})\big(\eta_{\ell}-\lambda\eta^2_{\mathcal{T}_\ell}(\sigma_\ell,\mathcal{M}_{\ell})\big)+(1+\delta_{*}^{-1})C_{\mathcal{T}_0}E_{\ell}.
\end{aligned}
\end{equation}
Combining \eqref{ctseta} and the marking property $\eta^2_{\mathcal{T}_\ell}(\sigma_\ell,\mathcal{M}_{\ell})\geq\theta^2\eta_\ell^2$, we obtain \eqref{cts} with $\beta=(1+\delta_*)(1-\lambda\theta^2)$. $\beta<1$ provided $\delta_*<\frac{\lambda\theta^2}{1-\lambda\theta^2}$.
\end{proof}

Now we are in a position to prove the error reduction.
\begin{theorem}\label{errorreduction}
When
\begin{equation*}
0<\varepsilon<\frac{1-\beta}{C_1 C_3},
\end{equation*}
there exists $\alpha \in$ \emph{(0,1)} and $C_4, \rho>0$ depending only on $\varepsilon$, $\theta$ and $\mathcal{T}_0$, such that
\begin{equation}\label{alpha}
(1- \varepsilon)e_{\ell+1} + \rho\eta_{\ell+1} \leq\alpha [(1-\varepsilon)e_\ell+\rho\eta_\ell] + C_4\hat{o}_\ell.
\end{equation}
\end{theorem}
\begin{proof}
Recall the quasi-orthogonality Theorem \ref{quasiortho} and global reliability Theorem \ref{upper}, 
\begin{align}
e_\ell&\leq C_1\eta_\ell,\label{up}\\
(1-\varepsilon)e_{\ell+1} &\leq e_\ell - E_\ell +C_0\varepsilon^{-1}\hat{o}_\ell, \text{ for any } \varepsilon> 0.\label{qo}
\end{align}
Let $\rho=1/C_3$ and $\alpha\in(0,1)$ to be determined. It follows from \eqref{qo}, \eqref{cts}, and \eqref{up} that
\begin{equation*}
\begin{aligned}
\left(1-\varepsilon\right)e_{\ell+1}+\rho\eta_{\ell+1}&\leq e_\ell+\rho\beta\eta_\ell+C_0\varepsilon^{-1}\hat{o}_\ell,\\
&\leq\alpha(1-\varepsilon)e_\ell+\left\{[1-\alpha(1-\varepsilon)]C_1+\rho\beta\right\}\eta_\ell+C_0\varepsilon^{-1}\hat{o}_\ell.
\end{aligned}
\end{equation*}
Let $\alpha$ solve
$\alpha\rho=[1-\alpha(1-\varepsilon)]C_1+\rho\beta.$
Then we obtain $$\alpha=\frac{C_1+\rho\beta}{(1-\varepsilon)C_1+\rho}<1$$ provided
$\varepsilon<\rho(1-\beta)/C_1.$
The proof is complete.
\end{proof}
The next lemma deals with oscillation reduction on two nested meshes.
\begin{lemma}\label{oscreductionthm}
Let $\Th$ be a conforming refinement of $\mathcal{T}_H$ and $\mathcal{R}_H$ be the set of refined elements in $\mathcal{T}_H$. Then
$$\osc_{\Th}^2(f,\Th)\leq\osc_{\mathcal{T}_H}^2(f,\mathcal{T}_H)-\lambda_{*}\osc_{\mathcal{T}_H}^2(f,\mathcal{R}_H),$$
where $\lambda_*=1-2^{-\frac{2}{n}}$.
\end{lemma}
\begin{proof}
For $T\in\mathcal{R}_H$, let $\hat{\mathcal{T}}_h:=\{t\in\Th: t\subset T\}.$ Then
\begin{equation*}
\begin{aligned}
&\sum_{t\in\hat{\mathcal{T}}_h}h_{t}^2\|f-f_{\Th}\|^2_{t}=\sum_{t\in\hat{\mathcal{T}}_h}|t|^{\frac{2}{n}}\|f-f_{\Th}\|^2_{t}\\
&\quad=2^{-\frac{2}{n}}h_T^2\sum_{t\in\hat{\mathcal{T}}_h}\|f-f_{\Th}\|^2_{t}\leq2^{-\frac{2}{n}}h_T^2\|f-f_{\mathcal{T}_H}\|^2_{T},
\end{aligned}
\end{equation*}
which implies
\begin{equation*}
\sum_{t\subset T,\  T\in\mathcal{R}_H}\osc^2_{\Th}(f,t)\leq2^{-\frac{2}{n}}\osc_{\mathcal{T}_H}^2(f,\mathcal{R}_H).
\end{equation*}
and thus
\begin{equation}\label{osc1}
\sum_{t\subset T,\  T\in\mathcal{R}_H}\osc^2_{\Th}(f,T)+\lambda_*\osc_{\mathcal{T}_H}^2(f,\mathcal{R}_H)\leq\osc_{\mathcal{T}_H}^2(f,\mathcal{R}_H).
\end{equation}
For $T\in\mathcal{T}_H\backslash\mathcal{R}_H$,
\begin{equation}\label{osc2}
\osc^2_{\Th}(f,T)=\osc_{\mathcal{T}_H}^2(f,T).
\end{equation}
Combining \eqref{osc1} and \eqref{osc2}, the proof is complete.
\end{proof}

With the above results we next prove contraction of \textsf{AMFEM}.
\begin{theorem}\label{termination}\emph{(contraction)}
Let $\{\sigma_\ell,\mathcal{T}_\ell\}_{\ell\geq0}$ be a sequence of solutions and meshes produced by \textsf{AMFEM}. For any $0<\varepsilon<(1-\beta)/(C_1 C_3)$, there exist $\rho, \zeta>0$ and 0$<\gamma<1$ depending only $\varepsilon, \theta$ and $\mathcal{T}_0$ such that,
\begin{equation*}
\begin{aligned}
&(1-\varepsilon)\| \sigma - \sigma_{\ell+1}\|^2 + \rho\eta_{\mathcal{T}_{\ell+1}}^2(\sigma_{\ell+1},\mathcal{T}_{\ell+1}) + \zeta\osc_{\mathcal{T}_{\ell+1}}^2( f, \mathcal{T}_{\ell+1})\\
&\leq\gamma\left\{(1-\varepsilon)\| \sigma - \sigma_\ell\|^2 + \rho\eta_{\mathcal{T}_{\ell}}^2(\sigma_\ell, \mathcal{T}_\ell) + \zeta\osc_{\mathcal{T}_{\ell}}^2( f, \mathcal{T}_\ell)\right\}.
\end{aligned}
\end{equation*}
\end{theorem}
\begin{proof}
Let $E_\ell=(1-\varepsilon)\| \sigma - \sigma_\ell\|^2 + \rho\eta_{\mathcal{T}_\ell}^2(\sigma_\ell, \mathcal{T}_\ell).$ Theorem \ref{errorreduction} and Lemma \ref{oscreductionthm} give
\begin{align}
E_{\ell+1}&\leq\alpha E_\ell+C_4\hat{o}_\ell,\label{reduction1}\\
o_{\ell+1}&\leq o_\ell-\lambda_*\hat{o}_\ell,\quad0<\lambda_*<1.\label{oscreduction}
\end{align}
Let $\zeta=\lambda_*^{-1}C_4$. Combined \eqref{reduction1}, \eqref{oscreduction}, and 
$$\rho o_\ell\leq\rho\eta_{\ell}\leq E_\ell,$$ 
we have
\begin{equation}\label{reduction2}
\begin{aligned}
E_{\ell+1}+\zeta o_{\ell+1}&\leq\alpha E_\ell+\zeta o_\ell\\
&\leq(\alpha+\zeta\alpha_1\rho^{-1})E_\ell+\zeta(1-\alpha_1)o_\ell\\
&=\gamma\left(E_\ell+\frac{\zeta(1-\alpha_1)}{\alpha+\zeta\alpha_1\rho^{-1}}o_\ell\right),
\end{aligned}
\end{equation}
where $\gamma:=\alpha+\zeta\alpha_1\rho^{-1}$, $\alpha_1\in(0,1)$ is a constant to be determined.
By requiring 
\begin{equation}\label{cond}
\alpha+\alpha_1\zeta\rho^{-1}<1,\quad\frac{1-\alpha_1}{\alpha+\zeta\alpha_1\rho^{-1}}\leq1,
\end{equation}
\eqref{reduction2} implies
$$E_{\ell+1}+\zeta o_{\ell+1}\leq\gamma(E_{\ell}+\zeta o_{\ell}).$$
\eqref{cond} is satisfied by selecting 
$$\frac{\rho(1-\alpha)}{\rho+\zeta}\leq\alpha_1<\min\left(1,\frac{\rho(1-\alpha)}{\zeta}\right).$$
The proof is complete.
\end{proof}

The methods used above to prove convergence have some similarities to prior work.  Our treatment of oscillation, however, uses properties of $\hat{o}_\ell$ that create distinct implementation and efficiency improvements. To clarify this point, next we compare our convergence proof with \cite{CHX} and \cite{YL}. 

In \cite{CHX}, oscillation is not included in the error indicator and therefore there is no control on $o_\ell$ in their quasi-orthogonality result \eqref{CHXquasi}. To enforce the strict reduction on $o_{\ell+1}\leq\kappa o_\ell$ for some $\kappa<1$, the AMFEM in \cite{CHX} imposed a separate marking for data oscillation. Our convergence analysis shows that the marking for data oscillation is somehow artificial. The convergence of \textsf{AMFEM} can be achieved by a single marking step based on the estimator. This improvement essentially results from the sharper quasi-orthogonality Theorem \ref{quasiortho} with the local data oscillation $\hat{o}_\ell$, which can be canceled using Lemma \ref{oscreductionthm} on the oscillation reduction.

The second author  \cite{YL} considered adaptive methods for the Hoge Laplacian problem \eqref{HL} on the de Rham complex with index $1\leq k\leq n$. Of particular interest here is the case $k=n$ for the mixed formulation of Poisson's equation. In particular, the AMFEM in \cite{YL} is a contraction in the error $\|\sigma-\sigma_h\|^2+\hat{\zeta}\|d(\sigma-\sigma_h)\|^2+\hat{\rho}\hat{\eta}^2_{\Th}(\sigma_h,\Th)$, which is generically of lower order than the total error in Theorem \ref{termination}. Since \cite{YL} considered the error $\|\sigma-\sigma_h\|_{H\Lambda^{k-1}}$ in the $V$-norm instead of the $L^2$-norm, an elementary quasi-orthogonality (Lemma 4.1 in \cite{YL})
$$\|\sigma-\sigma_h\|^2\leq\frac{1}{1-\varepsilon}\|\sigma-\sigma_H\|^2-\|\sigma_h-\sigma_H\|^2+\frac{\varepsilon}{1-\varepsilon}\|f_{\Th}-f_{\mathcal{T}_H}\|^2$$
is enough for convergence analysis there.
 
\subsection{ Optimality of \textsf{AMFEM}}
The next theorem  is devoted to a discrete upper bound, which is a common ingredient of optimality proofs in the literature. Similar bound for the Hodge Laplacian problem has already been established in \cite{YL} by using Demlow's technique in \cite{Demlow}. Since the estimator $\eta_{\Th}$ is different from the one when $k=n$ in \cite{YL}, we sketch the proof here.
\begin{theorem} \label{disupper}
\emph{(discrete upper bound)}
Let $\mathcal{T}_h$ be a conforming refinement of $\mathcal{T}_H$ and $\mathcal{R}_H$ be the set of refined elements. There exists  $\widetilde{\mathcal{R}}_H\supset\mathcal{R}_H$, which is the union of $\mathcal{R}_H$ and a collection of neighboring simplices of $\mathcal{R}_H$ with $\#\widetilde{\mathcal{R}}_H-\#\mathcal{R}_H\leq C$, such that
\begin{equation*} 
\| \sigma_h - \sigma_H \|\leq C_5\eta_{\mathcal{T}_H}(\sigma_H,\widetilde{\mathcal{R}}_H). 
\end{equation*}
\end{theorem}

\begin{proof}
The proof requires similar   ingredients needed to prove the continuous upper bound.  
We first perform the discrete Hodge decomposition of $  \sigma_h - \sigma_H$.
\begin{equation*}
\begin{aligned}
\sigma_h-\sigma_H&=P_{\mathfrak{B}_h}(\sigma_h-\sigma_H)+P_{\mathfrak{H}_h}(\sigma_h-\sigma_H)+P_{\mathfrak{Z}_h^\perp}(\sigma_h-\sigma_H)\\
&=(\sigma_h-P_{\mathfrak{Z}_h^\perp}\sigma_H)-P_{\mathfrak{B}_h}\sigma_H-P_{\mathfrak{H}_h}\sigma_H.
\end{aligned}
\end{equation*}
Then each component can be estimated by the same procedure in the proof of continuous upper bound. With minimal modifications in the proofs of Lemmas \ref{part1}, \ref{part2}, and \ref{part3}, we have
\begin{subequations}\label{dispart}
\begin{align}
\|\sigma_h-P_{\mathfrak{Z}_h^\perp}\sigma_H\|&\leq C\osc_{\Th}(f,\mathcal{R}_H),\\
\|P_{\mathfrak{B}_h}\sigma_{H}\|&= \ab{- \sigma_H}{d (\varphi - \Pi_h\varphi)}\label{dBh},\\
\|P_{\mathfrak{H}_h}\sigma_H\|&\leq C_{\mathcal{T}_0}\|\sigma_h-\sigma_H\|,\quad C_{\mathcal{T}_0}<1.
\end{align}
\end{subequations}
To obtain the localized bound 
\begin{equation}\label{dB2}\|P_{\mathfrak{B}_h}\sigma_H\|\leq C\big(\sum_{T\in\widetilde{\mathcal{R}}_H}h_T^2\|\delta\sigma_H\|^2+h_T\|\lr{\tr\star\sigma_H}\|_{\partial T}^2\big)^{\frac{1}{2}},
\end{equation}
we start from $\eqref{dBh}$ and using equations (4.11)-(4.17) in \cite{Demlow}. In the end, the discrete upper bound is proved by following the proof of Theorem \ref{upper} and using \eqref{dispart} and \eqref{dB2}.
\end{proof}

Let  $\mathbb{T}_N=\{\mathcal{T}\text{ is a conforming  refinement of } \mathcal{T}_0: \#\mathcal{T}-\#\mathcal{T}_0\leq N\}$. 
For $s>0$, we define the approximation classes
\begin{align*}
    \mathcal{A}_{s}&:=\{\tau\in H\Lambda^{n-1}(\Omega): |\tau|_s:=\sup_{N>0}\big(N^{s}\inf_{\mathcal{T}\in\mathbb{T}_N}\inf_{\tau_{\mathcal{T}}\in V^n(\mathcal{T})}\|\tau-\tau_{\mathcal{T}}\|\big)<\infty\},\\
    \mathcal{A}^o_{s}&:=\{g\in L^2\Lambda^{n}(\Omega): |g|^o_{s}:=\sup_{N>0}\big(N^{s}\inf_{\mathcal{T}\in\mathbb{T}_N}\osc_{\mathcal{T}}(g,\mathcal{T})\big)<\infty\}.
\end{align*}
To prove the quasi-optimality, an extra module \textsf{APPROX} in \cite{CHX} was assumed. Here we do not use \textsf{APPROX}. However, as in the classical AFEM literature \cite{RS07,CKNS}, we make the following assumptions.
\begin{assumption}\label{assump}
\
\begin{enumerate}
\item The marking parameter $\theta\in(0,\theta_*)$, where
$\theta_*^2=\min(1,\frac{C_2}{C_5})$.
\item The marking step marks a subset $\mathcal{M}_\ell$ with minimal cardinality. 
\item The accumulative cardinality of marked triangles satisfies \eqref{NVB}.
\end{enumerate}
\end{assumption}

The threshold $\theta_*$ for marking parameter $\theta$ comes from the next lemma.
\begin{lemma}\label{optmarking}
\emph{(optimal marking)}
Let $\mathcal{T}$ be a conforming refinement of $\mathcal{T}_0$ and $\sigma_{\mathcal{T}}\in V^n(\mathcal{T})$ be the solution of \eqref{HLDDRC} on $\mathcal{T}$. Set $\mu=1-\frac{\theta^2}{\theta^2_*}$. Let  $\mathcal{T}_*$ be a conforming refinement of $\mathcal{T}$, such that the finite element solution $\sigma_{\mathcal{T}_*}\in V^n(\mathcal{T}_*)$ satisfies
\begin{equation}\label{testreduction}\|\sigma-\sigma_{\mathcal{T}_*}\|^2+\osc^2_{\mathcal{T}_*}(f,\mathcal{T}_*)\leq\mu\left\{\|\sigma-\sigma_{\mathcal{T}}\|^2+\osc^2_{\mathcal{T}}(f,\mathcal{T})\right\}.
\end{equation}
Then the set of enlarged refined elements $\widetilde{\mathcal{R}}$ in Theorem \ref{disupper} verifies the D\"orfler marking property
$$\eta_{\mathcal{T}}(\sigma_\mathcal{T},\widetilde{\mathcal{R}})\geq\theta\eta_{\mathcal{T}}(\sigma_\mathcal{T},\mathcal{T}).$$
\end{lemma} 
\begin{proof}
By Theorem \ref{lower} and \eqref{testreduction},
\begin{equation}\label{interopt}
\begin{aligned}
&(1-\mu)C_2\eta^2_{\mathcal{T}}(\sigma_\mathcal{T},\mathcal{T})\leq(1-\mu)\big(\|\sigma-\sigma_{\mathcal{T}}\|^2+\osc^2_\mathcal{T}(f,\mathcal{T})\big)\\
&\qquad\leq\|\sigma-\sigma_{\mathcal{T}}\|^2-\|\sigma-\sigma_{\mathcal{T}_*}\|^2+\osc^2_\mathcal{T}(f,\mathcal{T})-\osc^2_{\mathcal{T}_*}(f,\mathcal{T}_*)\\
&\qquad\leq\|\sigma_\mathcal{T}-\sigma_{\mathcal{T}_*}\|^2.
\end{aligned}
\end{equation}
In the last step, we use the triangle inequality and $\osc_{\mathcal{T}_*}(f,\mathcal{T}_*)\leq\osc_\mathcal{T}(f,\mathcal{T})$. Then by \eqref{interopt} and Theorem \ref{disupper},
$$\eta^2_{\mathcal{T}}(\sigma_\mathcal{T},\widetilde{\mathcal{R}})\geq\frac{(1-\mu)C_2}{C_5}\eta^2_{\mathcal{T}}(\sigma_\mathcal{T},\mathcal{T}).$$
The proof is complete by $\theta^2_*\leq C_2/C_5$.
\end{proof}
 
 Combining the optimal marking Lemma \ref{optmarking} ,the contraction Theorem \ref{termination}, and the lower bound Theorem \ref{lower}, the quasi-optimality of \textsf{AMFEM} follows from the same proof in \cite{CKNS}, see Lemma 5.10 and Theorem 5.11 there for details. 
\begin{theorem}\emph{(quasi-optimality)}
Let Assumption \ref{assump} be satisfied. If $\sigma\in\mathcal{A}_s$ and $f \in \mathcal{A}_s^{o}$, then there exists $C_6$ depending only on $\theta, s,$ and $\mathcal{T}_0$, such that
\begin{equation*}
\left\{\| \sigma - \sigma_N\|^2+\osc_{\mathcal{T}_N}^2(\sigma_N,\mathcal{T}_N)\right\}^{\frac{1}{2}}\leq C_6( \|\sigma\|_{\mathcal{A}_s} + \|f\|_{\mathcal{A}^{o}_s})( \# \mathcal{T}_N- \# \mathcal{T}_0 )^{-s}.
\end{equation*}
\end{theorem}




\bibliographystyle{abbrv}
\bibliography{zbib/adam,zbib/mjh-papers,zbib/mjh-extra,zbib/HoSt2010b,zbib/library}



\end{document}